\renewcommand{\baselinestretch}{1.09}
\DeclareTextFontCommand{\defn}{\em \color{Maroon}}
\newtheorem{theorem}{Theorem}
\newtheorem{corollary}[theorem]{Corollary}
\newtheorem{lemma}[theorem]{Lemma}
\newtheorem{fact}[theorem]{Fact}
\theoremstyle{definition}
\newcommand{\tw}{\operatorname{tw}}
\let\le\leqslant
\let\ge\geqslant
\title{Edge separators for graphs excluding a minor}
\author[G.~Joret]{Gwena\"el Joret}
\address[G.~Joret]{Computer Science Department \\
  Universit\'e Libre de Bruxelles\\
  Brussels, Belgium}
\email{gwenael.joret@ulb.be}
\author[W.~Lochet]{William Lochet}
\address[W.~Lochet]{LIRMM, Université de Montpellier, CNRS, Montpellier, France,} 
\email{william.lochet@gmail.com}
\author[M.~T.~Seweryn]{Michał T.\ Seweryn}
\address[M.~T.~Seweryn]{Computer Science Department \\
  Universit\'e Libre de Bruxelles\\
  Brussels, Belgium}
\email{michal.seweryn@ulb.be}
\thanks{G.\ Joret and M.T.\ Seweryn are supported by a PDR grant from the Belgian National Fund for Scientific Research (FNRS). G.\ Joret is also supported by a CDR grant from FNRS, and by the Wallonia Brussels International (WBI) agency.}
\begin{document}

\maketitle

\begin{abstract}
We prove that every $n$-vertex $K_t$-minor-free graph $G$ of maximum degree $\Delta$ has a set $F$ of $O(t^2(\log t)^{1/4}\sqrt{\Delta n})$ edges such that every component of $G - F$ has at most $n/2$ vertices. This is best possible up to the dependency on $t$ and extends earlier results of Diks, Djidjev, S{\`y}kora, and Vr\v{t}o (1993) for planar graphs, and of S{\`y}kora and Vr\v{t}o (1993) for bounded-genus graphs. Our result is a consequence of the following more general result: The line graph of $G$ is isomorphic to a subgraph of the strong product $H \boxtimes K_{\lfloor p \rfloor}$ for some graph $H$ with treewidth at most $t-2$ and $p = \sqrt{(t-3)\Delta |E(G)|} + \Delta$.
\end{abstract}

\section{Introduction}

A \defn{balanced vertex separator} of an \(n\)-vertex graph \(G\)
is a set \(X \subseteq V(G)\) such that every component of
\(G - X\) has at most \(n/2\) vertices.\footnote{Note that it implies that the components of $G-X$ can be grouped into two sets each with at most $2n/3$ vertices; this is sometimes used as the definition of `balanced vertex separator' in the literature.} 
The well-known Planar Separator Theorem by Lipton and Tarjan%
~\cite{lipton1979separator}
states that every \(n\)-vertex planar graph has
a balanced vertex separator of
size \(O(\sqrt{n})\). 
Alon, Seymour and Thomas~\cite{alon1990separator} showed that
every \(n\)-vertex \(K_t\)-minor-free graph
has a balanced vertex separator of size at most \(t^{3/2}\sqrt{n} \). 

In this paper, we study balanced edge separators.
A \defn{balanced edge separator} of an \(n\)-vertex graph \(G\)
is a set \(F \subseteq E(G)\) such that every component of
\(G - F\) has at most  \(n/2\) vertices. 
The aforementioned classes
of graphs with balanced vertex separators of size
\(O(\sqrt{n})\)
do not admit balanced edge separators of size \(o(n)\);
indeed, the smallest balanced edge separator of the
\(n\)-vertex star \(K_{1, n-1}\) has size \(\lceil n/2 \rceil\).

The star, however, has a vertex of degree \(n-1\).
If we assume that the maximum degree \(\Delta\) of \(G\) is
sublinear in \(n\), then in some cases
we can retrieve sublinear balanced edge separators.
Diks, Djidjev, S{\`y}kora, and Vrťo~\cite{diks1993edge} showed that if \(G\) is planar,
then $G$ has a balanced edge separator of size \(O(\sqrt{\Delta n})\),
and S{\`y}kora and Vrťo~\cite{sykora1993edge} showed that
if the Euler genus of \(G\) is \(g\), then there exists a balanced edge
separator of size \(O(\sqrt{g\Delta n})\). These results are true also
in the weighted setting where each vertex \(x \in V(G)\) is assigned
a weight \(w(x)\) with \(0 \le w(x) \le \frac{1}{2}\), the total weight
of all vertices is \(1\), and the edge separator should split
\(G\) into components of weight at most \(\frac{1}{2}\).

Laso{\'n} and Sulkowska~\cite{lason2021modularity} showed that
in the weighted setting, if \(G\) is an \(n\)-vertex \(K_t\)-minor-free
graph of maximum degree \(\Delta = o(n)\) and the vertices are
weighted proportionally to their degrees, then there exists a
balanced edge separator of size \(o(n)\). 
Their proof relies on spectral methods---more precisely, on an upper bound for the second smallest eigenvalue of the Laplacian matrix of \(K_t\)-minor-free
graphs due to Biswal, Lee, and Rao~\cite{BLR10}---and only works for these specific weights.  
They asked if one can always find a balanced edge separator of size \(O_t(\sqrt{\Delta n})\) for any weights (including uniform weights), as is the case for planar graphs and graphs of bounded genus. 
In this paper, we give an affirmative answer to this question.

\begin{theorem}\label{thm:edge-separator}
  Let \(t \ge 3\), Let \(G\) be an \(n\)-vertex
  \(K_t\)-minor-free graph of maximum degree \(\Delta\),
  and let \(w \colon V(G) \to [0, \frac{1}{2}]\) be a weight function
  such that \(\sum_{x \in V(G)} w(x) = 1\). Then there exists a set
  \(F \subseteq E(G)\) with
  \[
    |F| \le (t-1)\left\lfloor\sqrt{(t-3)|E(G)|\Delta} + \Delta\right\rfloor  
  \in O\left(t^{2} (\log t)^{1/4}\cdot \sqrt{\Delta n}\right)
  \]
  such that
  \(\sum_{x \in V(C)}w(x) \le \frac{1}{2}\) for each component \(C\) of
  \(G - F\). 
\end{theorem}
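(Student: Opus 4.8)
The plan is to derive the theorem from the structure result announced in the abstract, which I take as given: the line graph $L(G)$ is isomorphic to a subgraph of $H\boxtimes K_{\lfloor p\rfloor}$ for some graph $H$ with $\tw(H)\le t-2$ and $p=\sqrt{(t-3)|E(G)|\Delta}+\Delta$. Fix such an $H$ and the associated embedding, so that each edge $e$ of $G$ is identified with a vertex $(v_e,i_e)\in V(H)\times V(K_{\lfloor p\rfloor})$ of $L(G)$, and fix a tree decomposition $(T,(\beta_u)_{u\in V(T)})$ of $H$ of width at most $t-2$. The crucial observation is that any two edges of $G$ sharing an endpoint are adjacent in $L(G)$, and in $H\boxtimes K_{\lfloor p\rfloor}$ adjacency forces the first coordinates to be equal or adjacent in $H$; hence, for every vertex $x$ of $G$, the set $\{v_e : e\in E(G),\ x\in e\}$ is a clique of $H$, and since $\tw(H)\le t-2$ it lies in some bag. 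Accordingly, for each vertex $x$ of positive degree pick a node $u_x\in V(T)$ with $\{v_e : x\in e\}\subseteq\beta_{u_x}$.

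Next, weight the tree $T$ by $z(u):=\sum_{x\,:\,u_x=u}w(x)$, where the sum is over vertices $x$ of positive degree, so that $\sum_{u\in V(T)}z(u)\le 1$, and choose a centroid $u^\star$ of $(T,z)$, i.e.\ a node such that every component of $T-u^\star$ has $z$-weight at most $\tfrac12$ (possible since the total weight is at most $1$). Then put
\[
  F \;:=\; \bigl\{\, e\in E(G) : v_e\in\beta_{u^\star} \,\bigr\}.
\]
At most $\lfloor p\rfloor$ edges of $G$ can share a fixed vertex of $H$ as first coordinate, and $|\beta_{u^\star}|\le t-1$, so $|F|\le (t-1)\lfloor p\rfloor=(t-1)\bigl\lfloor\sqrt{(t-3)|E(G)|\Delta}+\Delta\bigr\rfloor$; the asymptotic bound follows by inserting the classical estimate $|E(G)|=O\bigl(t\sqrt{\log t}\cdot n\bigr)$ for $K_t$-minor-free graphs and using $\Delta\le n$ to absorb the additive $\Delta$.

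The step I expect to take the most care is verifying that $F$ is balanced, i.e.\ that $\sum_{x\in V(C)}w(x)\le\tfrac12$ for every component $C$ of $G-F$. (The naive alternative of transporting the vertex weights of $G$ onto $V(L(G))$ and invoking a balanced vertex separator of $L(G)$ fails, because the edges of $F$ may be concentrated around a few vertices, so a component of $G-F$ may carry far more $G$-weight than the $L(G)$-weight of its edge set; the charging of vertices to tree nodes is exactly what circumvents this.) If $C$ is a single vertex, the bound holds because $w\le\tfrac12$. Otherwise $C$ has an edge, so every $y\in V(C)$ has positive degree, $u_y$ is defined, and I claim all these nodes lie in one component of $T-u^\star$: for an edge $e=yy'$ of $C$ we have $e\notin F$, hence $v_e\notin\beta_{u^\star}$, while $v_e\in\beta_{u_y}\cap\beta_{u_{y'}}$; thus the set $\{u : v_e\in\beta_u\}$ is a connected subtree that avoids $u^\star$ and meets both $u_y$ and $u_{y'}$, so $u_y$ and $u_{y'}$ lie in the same component of $T-u^\star$. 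Traversing paths in the connected graph $C$, all of $\{u_y : y\in V(C)\}$ lies in one component $T_C$ of $T-u^\star$, whence
\[
  \sum_{x\in V(C)}w(x)\;\le\;\sum_{u\in V(T_C)}z(u)\;\le\;\tfrac12 .
\]
The substantive work underpinning this deduction is the structure theorem, which I would prove separately.
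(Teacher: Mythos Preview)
Your proof is correct and follows essentially the same approach as the paper: you assign each vertex of $G$ to a node of the tree (using that the edges through a vertex form a clique in $L(G)$), pick a weighted centroid, and take $F$ to be the edges landing in the centroid bag. The only cosmetic difference is that the paper first passes through the derived treewidth bound on $L(G)$ (Theorem~\ref{thm:line-graph-tw}) and works with a tree-decomposition of $L(G)$ directly, whereas you work with a tree-decomposition of $H$ and the product structure; the two are equivalent via the blow-up $(B_u)\mapsto (B_u\times V(K_{\lfloor p\rfloor}))$.
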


This result is best possible up to dependency on \(t\): S{\`y}kora and Vrťo~\cite{sykora1993edge} showed that there exist $n$-vertex planar graphs \(G\) of maximum degree $\Delta$ such that every balanced edge separator has size \(\Omega(\sqrt{\Delta n})\). 

We actually prove the following stronger result. 

\begin{theorem}\label{thm:line-graph-product-structure}
  Let \(t \ge 3\) and let \(G\) be a \(K_t\)-minor-free graph of
  maximum degree \(\Delta\) with
  \(m\) edges. Then the line graph of \(G\) is isomorphic to
  a subgraph of the strong product \(H \boxtimes K_{\lfloor p \rfloor}\)
  for some graph \(H\) with
  \(\tw(H) \le t - 2\) and \(p = \sqrt{(t-3)\Delta m} +\Delta\). 
\end{theorem}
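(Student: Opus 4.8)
The plan is to deduce \Cref{thm:line-graph-product-structure} from a statement purely about edge-partitions of $G$, and then to build such a partition by a recursive decomposition of $G$ along balanced separators. For the first step, observe that it suffices to find a partition $\{E_h : h \in V(H)\}$ of $E(G)$ into sets of size at most $\lfloor p\rfloor$ such that the graph $H$ on the vertex set $\{h\}$, with $hh' \in E(H)$ whenever some edge in $E_h$ and some edge in $E_{h'}$ share an endpoint in $G$, satisfies $\tw(H) \le t-2$. Indeed, given such a partition, choose for each $h$ an injection from $E_h$ to $[\lfloor p\rfloor]$ and map each $e \in E_h$ to the pair consisting of $h$ and the image of $e$; this is an isomorphism from $L(G)$ onto a subgraph of $H \boxtimes K_{\lfloor p\rfloor}$, because the map is injective and, whenever two edges $e,f$ of $G$ meet at a vertex $v$, their parts both belong to the set $Q_v$ of all $h$ with $E_h$ meeting the edges at $v$, which is a clique of $H$ (so in the strong product either the two parts coincide and the colours differ, or the two parts are adjacent). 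Note that $|Q_v| \le t-1$ holds automatically, since $\tw(H) \le t-2$.

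To construct the partition, proceed recursively. At the base, if $G$ has at most $\lfloor p\rfloor$ edges we are done with $H$ a single vertex. In general, apply the Alon--Seymour--Thomas theorem, in its vertex-weighted form with each vertex weighted by its degree, to obtain $S \subseteq V(G)$ with $|S| \le t^{3/2}\sqrt{|V(G)|}$ such that every component of $G-S$ carries at most half of the total degree; every such component $C$ then has at most $\tfrac12|E(G)|$ edges in $G[V(C)]$. Let $G^{+}$ be the subgraph of $G$ consisting of the edges meeting $S$, and let $C_1, C_2, \dots$ be the components of $G-S$. Every edge of $G$ lies in $G^{+}$ or in exactly one $C_i$; all of these graphs are $K_t$-minor-free with maximum degree at most $\Delta$. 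Recurse on $G^{+}$ and on each $C_i$, obtaining edge-partitions with class graphs $H^{+}, H_1, H_2, \dots$ of treewidth at most $t-2$, take their union as the edge-partition of $G$, and let $H$ be the class graph it induces. By construction $H$ arises from the disjoint union $H^{+} \sqcup H_1 \sqcup H_2 \sqcup \cdots$ by adding, for each vertex $v$ that meets $S$ and lies in some $C_i$, all edges between the classes of $H^{+}$ meeting the edges at $v$ and the classes of $H_i$ meeting the edges at $v$; the classes meeting the edges at a vertex of $S$, or at a vertex of $C_i$ with no neighbour in $S$, are already dealt with inside a single recursive call.

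The heart of the matter --- and the step I expect to be the main obstacle --- is keeping $\tw(H) \le t-2$ through the recursion, since a gluing of this shape naively increases the treewidth by an additive constant at every level (one would effectively make the classes carrying the separator edges adjacent to the ``boundary'' classes of every component). To avoid this one strengthens the inductive statement with an interface invariant: one fixes a width-$(t-2)$ tree decomposition of each class graph and requires that, for every boundary vertex, all classes meeting its edges sit inside a single bag and occupy only a bounded number of the $t-1$ slots available in that bag, reserving the remaining slots for the classes attached from the other side, so that the two tree decompositions can be glued along those bags without ever creating a clique of size $t$. This is precisely where the slack ``$t-3$'' in $p = \sqrt{(t-3)\Delta m} + \Delta$ is spent, while the additive ``$+\Delta$'' absorbs the at most $\Delta$ edges at a single boundary vertex that must be routed through $G^{+}$. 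A second point requiring care is the choice of recursion parameter ensuring termination with all class sizes at most $\lfloor p\rfloor$: the subgraph $G^{+}$ of separator-incident edges need not have fewer edges than $G$, so one must work with a parameter blending edge count, vertex count and the density bound $m = O(t\sqrt{\log t}\,n)$ for $K_t$-minor-free graphs. Granting this, the bound on class sizes reduces to telescoping inequalities of the form $\sqrt{(t-3)\Delta \cdot \tfrac12 m} + O(\text{separator term}) \le \sqrt{(t-3)\Delta m}$, with the base case $|E(G)| \le \lfloor p\rfloor$ absorbing the small cases.
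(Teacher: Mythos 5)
Your opening reduction is fine and matches the paper's notion of a $(t-2,p)$-partition: it suffices to partition $E(G)$ into classes of size at most $p$ whose quotient graph $H$ has $\tw(H)\le t-2$. But the construction you propose for that partition has a genuine gap exactly at the point you yourself flag as ``the heart of the matter.'' Asserting an ``interface invariant'' --- that all classes meeting a boundary vertex sit in one bag and occupy only a bounded number of slots, with the rest reserved for the other side --- is not a proof that such an invariant can be maintained, and I do not see how it could be: a single component $C_i$ of $G-S$ has many boundary vertices, attached to many different classes of $H^{+}$ that need not be confined to few bags of any width-$(t-2)$ decomposition of $H^{+}$, and symmetrically the classes of $H_i$ meeting $N(S)$ are spread over $H_i$. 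The mechanism that actually yields treewidth $t-2$ in the paper is not a bag-slot bookkeeping argument at all: the recursion (following Alon--Seymour--Thomas and Illingworth--Scott--Wood) maintains a \emph{connected} current piece $C$ together with at most $h$ interface edge-classes $E_1,\dots,E_h$ attached to branch sets $U_1,\dots,U_h$ of a $K_h$-model in $G-V(C)$; since $(V(C),U_1,\dots,U_h)$ is then a $K_{h+1}$-model, $K_t$-minor-freeness forces $h\le t-2$, and this is the sole source of the treewidth bound. The recursion step is also different from yours: either a small tree hitting all attachment sets becomes a \emph{new} interface class (growing the clique model), or an edge separator $F$ is found so that each component of $C-F$ misses some attachment set $A_{i'}$, and then $F$ \emph{replaces} $E_{i'}$ as an interface class for that component --- this swap is what keeps the number of interface classes from growing, and nothing in your scheme plays its role.

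A secondary but also unresolved issue is termination and the class-size bound. Your $G^{+}$ (the edges meeting $S$) can have up to $\Delta|S|\approx t^{3/2}\Delta\sqrt{n}$ edges, which exceeds $p$ and need not be smaller than $m$ in any parameter that makes the recursion well-founded; you acknowledge this but only gesture at ``a parameter blending edge count, vertex count and the density bound.'' In the paper this problem does not arise because the separator is a set of \emph{edges} of size $\sqrt{(h-1)\Delta m}\le p$ obtained from an edge version of the Alon--Seymour--Thomas covering/separating lemma applied to the line graph, so the separator itself is immediately a single valid class. As written, your argument establishes neither $\tw(H)\le t-2$ nor the bound $|E_h|\le p$ for the separator-incident classes, so the proof is incomplete at its two essential points.
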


The \defn{strong product \(H \boxtimes K\)} of graphs \(H\) and \(K\) is a graph on \(V(H) \times V(K)\) where \((x_1, y_1)\) and
\((x_2, y_2)\) are adjacent if \(x_1 = x_2\) and \(y_1 y_2 \in E(K)\), or
\(x_1 x_2 \in E(H)\) and \(y_1 = y_2\), or
\(x_1 x_2 \in E(H)\) and \(y_1 y_2 \in E(K)\). 
(When $K$ is a complete graph on $p$ vertices, taking the strong product of $H$ with $K$ amounts to `blowing up' each vertex of $H$ by a clique of size $p$.) 
Theorem~\ref{thm:line-graph-product-structure} directly implies the following upper bound on the treewidth of the line graph of \(G\). 

\begin{theorem}\label{thm:line-graph-tw}
  Let \(t \ge 3\), and let \(G\) be a \(K_t\)-minor free graph of
  maximum degree \(\Delta\) with \(m\) edges. Then the line
  graph of \(G\) has treewidth at most \((t-1) \lfloor \sqrt{(t-3)\Delta m} + \Delta\rfloor - 1\).
\end{theorem}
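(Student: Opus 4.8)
The plan is to obtain Theorem~\ref{thm:line-graph-tw} as an immediate consequence of Theorem~\ref{thm:line-graph-product-structure}. Write $L(G)$ for the line graph of $G$. Two standard facts about treewidth are all that is needed beyond the product-structure statement: \emph{(i)} treewidth is monotone under taking subgraphs (and invariant under isomorphism); and \emph{(ii)} for every graph $H$ and every positive integer $k$, $\tw(H \boxtimes K_k) \le (\tw(H)+1)k - 1$, i.e.\ blowing up each vertex of $H$ by a clique of size $k$ multiplies the number-of-vertices bound on the bags by $k$.

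For \emph{(ii)} I would take an optimal tree decomposition $(T,(B_x)_{x \in V(T)})$ of $H$, so that every bag satisfies $|B_x| \le \tw(H)+1$, and replace each bag $B_x$ by $B_x \times V(K_k)$. This is a tree decomposition of $H \boxtimes K_k$: for each vertex $(v,i)$ the set of new bags containing it is $\{\,B_x \times V(K_k) : v \in B_x\,\}$, which is nonempty and induces a subtree of $T$ since $(T,(B_x)_x)$ has this property for $v$; and every edge $(x_1,y_1)(x_2,y_2)$ of $H\boxtimes K_k$ has its two endpoints projecting either to a single vertex of $H$ (when $x_1=x_2$) or to an edge of $H$ (when $x_1x_2\in E(H)$), so both endpoints appear together in some new bag. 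Each new bag has at most $(\tw(H)+1)k$ vertices, hence the width is at most $(\tw(H)+1)k-1$.

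Now apply Theorem~\ref{thm:line-graph-product-structure} to $G$ (the case $m=0$, where $L(G)$ is empty and $\Delta=0$, being trivial): $L(G)$ is isomorphic to a subgraph of $H \boxtimes K_{\lfloor p\rfloor}$ for some graph $H$ with $\tw(H)\le t-2$ and $p = \sqrt{(t-3)\Delta m}+\Delta \ge \Delta \ge 1$, so $\lfloor p\rfloor$ is a positive integer. Combining \emph{(i)} and \emph{(ii)},
\[
  \tw(L(G)) \;\le\; \tw\bigl(H \boxtimes K_{\lfloor p\rfloor}\bigr) \;\le\; \bigl(\tw(H)+1\bigr)\lfloor p\rfloor - 1 \;\le\; (t-1)\lfloor p\rfloor - 1,
\]
and since $\lfloor p\rfloor = \lfloor \sqrt{(t-3)\Delta m}+\Delta\rfloor$, this is exactly the claimed bound.

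I do not anticipate a genuine obstacle: all of the substance lies in Theorem~\ref{thm:line-graph-product-structure}, and the reduction above is routine. The only point that needs care is verifying that the blown-up bags really do form a tree decomposition of the strong product — in particular that edges $(x_1,y_1)(x_2,y_2)$ with $x_1x_2\in E(H)$ and $y_1\neq y_2$ are covered — but this is handled exactly as in \emph{(ii)}.
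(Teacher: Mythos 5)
Your proposal is correct and follows exactly the paper's own proof: apply Theorem~\ref{thm:line-graph-product-structure}, then blow up each bag of an optimal tree-decomposition of $H$ by $V(K_{\lfloor p\rfloor})$ to bound $\tw(H\boxtimes K_{\lfloor p\rfloor})$ by $(t-1)\lfloor p\rfloor-1$. The only difference is that you verify the tree-decomposition axioms for the blown-up bags in more detail than the paper does.
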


Theorem~\ref{thm:edge-separator} then follows  from Theorem~\ref{thm:line-graph-tw} by a simple argument on the tree-decomposition provided by the latter theorem (see Section~\ref{sec:proofs}). 

Theorem~\ref{thm:line-graph-product-structure} can be thought of as an `edge version' of the following recent strengthening of the balanced vertex separator result by Alon, Seymour and Thomas~\cite{alon1990separator} due to Illingworth, Scott and Wood~\cite{illingworth2021alon}: Every \(n\)-vertex \(K_t\)-minor free is isomorphic to a subgraph of the strong product
\(H \boxtimes K_{\lfloor p \rfloor}\) where \(\tw(H) \le t-2\) and \(p = \sqrt{(t-3)n}\). 
The authors of~\cite{illingworth2021alon} established their result by modifying the proof in~\cite{alon1990separator}. 
Our proof of Theorem~\ref{thm:line-graph-product-structure} is likewise a modification of the proof in~\cite{alon1990separator} and relies heavily on the insights from~\cite{illingworth2021alon}; the main work consists in adapting to the edge setting.

One consequence of Theorem~\ref{thm:edge-separator} is an upper bound for the isoperimetric number (a.k.a.\ edge expansion or Cheeger constant) of \(K_t\)-minor-free graphs. 
The \defn{isoperimetric number \(\phi(G)\)} of a
graph \(G\) is defined as
\[
  \phi(G) = \min \left\{\frac{|\{x y \in E(G): x \in S, y \not \in S\}|}{|S|}:
  S \subseteq V(G), 1\le |S| \le |V(G)|/2\right\}.
\]

\begin{corollary}\label{cor:isoperimetric_number}
For \(t \ge 3\), every \(n\)-vertex \(K_t\)-minor-free graph \(G\) with maximum degree $\Delta$ satisfies
\[
  \textstyle \phi(G) = O\left(t^2 (\log t)^{1/4} \cdot \sqrt{\frac{\Delta}{n}}\right)
\]
\end{corollary}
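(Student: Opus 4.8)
The plan is to derive Corollary~\ref{cor:isoperimetric_number} as a quick consequence of Theorem~\ref{thm:edge-separator}. We may assume $n \ge 2$. I would apply Theorem~\ref{thm:edge-separator} to $G$ with the uniform weight function $w(x) = 1/n$ for all $x \in V(G)$ (so that $\sum_{x} w(x) = 1$ and $w(x) \le \tfrac12$); this yields a set $F \subseteq E(G)$ with $|F| \in O(t^2(\log t)^{1/4}\sqrt{\Delta n})$ such that every component $C$ of $G - F$ satisfies $|V(C)| \le n/2$. The only remaining task is to assemble these components into a single vertex subset of $G$ with small edge boundary relative to its size.

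To do this, I would group the components of $G - F$ into a set $S \subseteq V(G)$ with $\tfrac{n}{4} \le |S| \le \tfrac{n}{2}$. Such an $S$ always exists: if some component has at least $n/4$ vertices, take its vertex set (which has at most $n/2$ vertices); otherwise, add components one at a time until the accumulated set first has at least $n/4$ vertices, and observe that at that moment it still has fewer than $\tfrac{n}{4} + \tfrac{n}{4} = \tfrac{n}{2}$ vertices. In particular $1 \le |S| \le n/2$, so $S$ is admissible in the minimum defining $\phi(G)$. Since $S$ is a union of connected components of $G - F$, every edge of $G$ with exactly one endpoint in $S$ belongs to $F$, so there are at most $|F|$ of them. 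Hence
\[
  \phi(G) \;\le\; \frac{\bigl|\{xy \in E(G) : x \in S,\ y \notin S\}\bigr|}{|S|}
  \;\le\; \frac{|F|}{n/4} \;=\; \frac{4|F|}{n}
  \;\in\; O\!\left(t^2(\log t)^{1/4}\cdot\frac{\sqrt{\Delta n}}{n}\right)
  \;=\; O\!\left(t^2(\log t)^{1/4}\sqrt{\tfrac{\Delta}{n}}\right),
\]
as desired.

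I do not expect any real obstacle here: once Theorem~\ref{thm:edge-separator} is available, the deduction is entirely routine. The single point worth a moment's care is to merge the components of $G - F$ into a set whose size is a constant fraction of $n$ rather than merely nonempty — it is precisely this $\Theta(n)$ in the denominator that turns the $O(\sqrt{\Delta n})$ bound on $|F|$ into the $O(\sqrt{\Delta/n})$ bound on $\phi(G)$.
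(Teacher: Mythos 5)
Your proof is correct and follows essentially the same route as the paper: apply Theorem~\ref{thm:edge-separator} with uniform weights, group components of $G-F$ into a set $S$ of size $\Theta(n)$ bounded by $n/2$, and observe that all edges leaving $S$ lie in $F$. The only cosmetic difference is your threshold $n/4$ versus the paper's $n/3$ (your greedy justification is actually the cleaner of the two), which is immaterial for the asymptotic bound.
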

\begin{proof}
  Let \(F\) be a balanced edge separator of \(G\) with
  \(|F| = O(t^2 (\log t)^{1/4} \cdot \sqrt{\Delta n})\) given by Theorem~\ref{thm:edge-separator}. 
  Since each component of \(G - F\) has at most \(n/2\) vertices,
  we may choose a subset of these components so that the union \(S\) of their
  vertex sets satisfies \(\frac{1}{3}n \le |S| \le \frac{1}{2}n\). 
  It follows 
  \[
  \phi(G) \le \frac{|\{x y \in E(G) : x \in S, y \not \in S\}|}{|S|} \le \frac{|F|}{n/3}
  = O\left(t^2 (\log t)^{1/4} \cdot \sqrt{\frac{\Delta}{n}}\right).\qedhere
  \] 
\end{proof}

The bound in Corollary~\ref{cor:isoperimetric_number} is best possible up to the dependence on $t$, and extends previous bounds for planar graphs~\cite{diks1993edge} and bounded-genus graphs~\cite{sykora1993edge}. 

Our proofs are constructive, and in particular, there exists a polynomial time algorithm, which given a graph \(G\) and an integer \(t\), outputs a set \(F\) as in Theorem~\ref{thm:edge-separator} and a graph \(H\) as in Theorem~\ref{thm:line-graph-product-structure}
together with an isomorphism between
the line graph of \(G\) and a subgraph of
\(H \boxtimes K_{\lfloor p \rfloor}\) where
\(p = \sqrt{(t-3)\Delta m} + \Delta\).

In Section~\ref{sec:preliminaries} we introduce all the necessary
definitions, notations and preliminary results, and in
Section~\ref{sec:proofs} we prove Theorems~\ref{thm:edge-separator}, \ref{thm:line-graph-product-structure} and~\ref{thm:line-graph-tw}.

\section{Preliminaries}\label{sec:preliminaries}
We consider simple finite undirected graphs \(G\)
with vertex set \(V(G)\)
and edge set \(E(G)\). For subsets \(X, Y \subseteq V(G)\), we denote
by \defn{\(E_G(X, Y)\)} the set of all edges \(x y \in E(G)\) with \(x \in X\)
and \(y \in Y\).
We denote by \defn{\(N_G(X)\)} the open neighborhood of a set
\(X \subseteq V(G)\), i.e.\ the set of all vertices outside $X$ that are adjacent to at least one vertex from \(X\). 
We drop the subscripts from the notations \(E_G(X, Y)\) and \(N_G(X)\) when the graph $G$ is clear from the context. 

A set of vertices \(U \subseteq V(G)\) is \defn{connected} in a graph \(G\) if the induced subgraph \(G[U]\) is connected. For \(t \ge 1\), a \defn{\(K_t\)-model} in 
\(G\) is a sequence \((U_1, \ldots, U_t)\) of pairwise disjoint connected subsets of \(V(G)\) such that
\(E(U_i, U_j) \neq \emptyset\) for distinct \(i, j \in \{1, \ldots, t\}\).
Note that $G$ contains \(K_t\) as a minor if and only if $G$ has a \(K_t\)-model.

A \defn{tree-decomposition} 
of a graph \(G\) is a family \((B_u)_{u \in V(T)}\)
of subsets of \(V(G)\) called \defn{bags}, indexed by nodes of a tree \(T\), such that
\begin{itemize}
  \item \(V(G) = \bigcup_{u\in V(T)} B_u\),
  \item for every \(x y \in E(G)\) there exists
    \(u \in V(T)\) with \(\{x, y\} \subseteq B_u\), and
  \item For all \(u_1, u_2, u_3 \in V(T)\) such that \(u_2\) lies on the path between \(u_1\)
  and \(u_3\) in \(T\), we have \(B_{u_1} \cap B_{u_3} \subseteq B_{u_2}\).
\end{itemize}
The \defn{width} of \((B_u)_{u \in V(T)}\) is
\(\max\{|B_u| - 1 : u \in V(T)\}\).
The \defn{treewidth} of a graph \(G\), denoted \defn{\(\tw(G)\)},
is the minimum width of its tree-decompositions.
The following fact summarizes some simple properties of
tree-decompositions that we use in our proof.

\begin{fact}\label{fact}
  Given a graph \(G\) with a tree-decomposition \((B_u)_{u \in V(T)}\) of width at most
  \(k\), the following properties hold:
  \begin{enumerate}[(\roman*)]
    \item\label{fact:complete-intersection} For every graph \(G'\) such that
    \(\tw(G') \le k\) and \(G \cap G'\) is a (possibly empty) complete graph,
    we have \(\tw(G \cup G') \le k\).
    \item\label{fact:attaching-to-clique} Every graph \(G'\) obtained from
    \(G\) by adding a new vertex adjacent to a clique of $G$ of size at most \(k\) has treewidth at most \(k\).
    \item\label{fact:clique} For every clique \(X\) in \(G\) there exists \(u \in V(T)\) with \(X \subseteq B_u\).
    \item\label{fact:connected} For every connected set \(U \subseteq V(G)\),
    the set \(\{u \in V(T): B_u \cap U \neq \emptyset\}\) is connected in \(T\).
  \end{enumerate}
\end{fact}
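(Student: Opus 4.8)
The plan is to prove the four items in the order \ref{fact:connected}, \ref{fact:clique}, \ref{fact:complete-intersection}, \ref{fact:attaching-to-clique}, each building on the earlier ones, since these are the standard properties of tree-decompositions. For $x \in V(G)$ write $T_x = \{u \in V(T) : x \in B_u\}$; the first condition in the definition of a tree-decomposition makes $T_x$ nonempty, and the third condition makes it path-closed in $T$, so $T_x$ induces a subtree of $T$. To prove \ref{fact:connected}, given a connected set $U$, I would show that $\{u : B_u \cap U \neq \emptyset\} = \bigcup_{x \in U} T_x$ is connected in $T$ as follows: for nodes $u \in T_x$ and $u' \in T_{x'}$ with $x, x' \in U$, take a path $x = x_0, x_1, \dots, x_\ell = x'$ in $G[U]$; each edge $x_i x_{i+1}$ lies in a common bag, so $T_{x_i} \cap T_{x_{i+1}} \neq \emptyset$, and hence $T_{x_0} \cup \dots \cup T_{x_\ell}$ is a connected subset of $T$ containing both $u$ and $u'$.

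For \ref{fact:clique}, applying \ref{fact:connected} to singletons shows each $T_x$ ($x \in X$) is a subtree of $T$, and for distinct $x, y \in X$ the edge $xy$ lies in a common bag, so the family $\{T_x : x \in X\}$ pairwise intersects. The Helly property of subtrees of a tree then yields a common node $u$, which satisfies $X \subseteq B_u$. I would include the short proof of that Helly property: root $T$, let $r_i$ be the vertex of $T_i$ closest to the root (so $T_i$ lies entirely in the subtree of $T$ below $r_i$), choose $r_j$ deepest among all the $r_i$, and for each $i$ pick $w \in T_i \cap T_j$; then $w$ lies below both $r_i$ and $r_j$, so $r_i$ and $r_j$ are comparable, $r_j$ lies on the path from $w$ to $r_i$, and that path lies in $T_i$, giving $r_j \in T_i$.

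For \ref{fact:complete-intersection}, let $W = V(G) \cap V(G')$, which is a clique of both $G$ and $G'$ since $G \cap G' = G[W]$ is complete. By \ref{fact:clique}, some bag of the given decomposition of $G$ contains $W$, and some bag of a fixed decomposition of $G'$ of width at most $k$ contains $W$; I would join the two trees by a new edge between these two nodes and keep all bags. The covering conditions for $V(G \cup G') = V(G) \cup V(G')$ and $E(G \cup G') = E(G) \cup E(G')$ are immediate, the width is unchanged, and for the path condition it suffices to note that any vertex appearing in bags on both sides of the new edge must lie in $W$, hence in both chosen bags. Item \ref{fact:attaching-to-clique} is then the special case of \ref{fact:complete-intersection} in which $G'$ is the complete graph on $\{v\} \cup X$, where $v$ is the added vertex and $X$ the clique it is joined to: this $G'$ has at most $k+1$ vertices so $\tw(G') \le k$, while $G \cap G' = G[X]$ is complete and $G \cup G' = G'$.

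There is no real obstacle here; the only points that need a moment's care are recalling the Helly property of subtrees used in \ref{fact:clique} and verifying the path condition of a tree-decomposition after the gluing in \ref{fact:complete-intersection}.
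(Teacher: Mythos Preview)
The paper does not prove this Fact at all; it is stated without proof as standard background on tree-decompositions. Your proposal supplies the usual textbook arguments and is correct, so there is nothing to compare against.

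One small slip to fix: in your treatment of \ref{fact:attaching-to-clique} you reuse the symbol \(G'\) for the auxiliary complete graph on \(\{v\}\cup X\), whereas in the statement \(G'\) already denotes the graph obtained from \(G\) by attaching \(v\). Your final clause ``\(G\cup G' = G'\)'' then equivocates between these two meanings and is literally false for your auxiliary \(G'\). Rename the auxiliary graph (say \(K\)) and conclude instead that \(G\cup K\) is precisely the graph described in \ref{fact:attaching-to-clique}; the argument is otherwise fine.
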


The \defn{line graph \(L(G)\)} of a graph \(G\) is a graph whose
vertex set is \(E(G)\) and in which distinct edges
\(e, e' \in E(G)\) are adjacent if they share a common end in \(G\).

Given a graph $G$, a \defn{graph-partition} of \(G\) is a graph \(H\) such that the vertex set of \(H\) is a partition of \(V(G)\) into nonempty parts, and for all distinct \(X, Y \in V(H)\) we have \(X Y \in E(H)\) if \(E_G(X, Y) \neq \emptyset\).  
(Note that it is allowed to have \(X Y \in E(H)\) in case \(E_G(X, Y) = \emptyset\), there is no restriction in this case.) 
For an integer \(k\) and a real \(p\), we call 
a graph-partition \(H\) of \(G\) a \defn{\((k, p)\)-partition}
if  \(\tw(H) \le k\) and \(|X| \le p\) for each \(X \in V(H)\).
Observe that if a graph \(G\) has a \((k, p)\)-partition then $G$ is isomorphic to a subgraph of \(H \boxtimes K_{\lfloor p \rfloor}\) for some graph \(H\) with \(\tw(H) \le k\). 
In the proofs, we will often consider a graph-partition \(H\) of a graph $G$ together with some distinguished clique \(\{X_1, \ldots, X_h\}\) of $H$; in this case, we say that \(H\) is \defn{rooted} at \(\{X_1, \ldots, X_h\}\).

\section{The proofs}\label{sec:proofs}

We need the following lemma by Alon, Seymour and Thomas~\cite{alon1990separator}.

\begin{lemma}[\cite{alon1990separator}]\label{lem:ast}
  Let \(G\) be a graph, let \(A_1, \ldots, A_h\) be \(h\) subsets of \(V(G)\), and let $r$ be a real number with \(r \ge 1\).
  Then either: 
  \begin{itemize}
    \item there is a tree \(T\) in \(G\) with \(|V(T)| \le r\) such that 
    \(V(T) \cap A_i \neq \emptyset\) for every \(i \in \{1, \ldots, h\}\), or
    \item there exists \(Z \subseteq V(G)\) with \(|Z| \le (h-1)|V(G)|/r\)
    such that no component of \(G - Z\) intersects
    all of \(A_1, \ldots, A_h\).
  \end{itemize}
\end{lemma}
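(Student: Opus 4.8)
The plan is to follow the classical argument of Alon, Seymour and Thomas. First, we may assume $r>h-1$: otherwise $(h-1)\lvert V(G)\rvert/r\ge \lvert V(G)\rvert$, so $Z=V(G)$ trivially satisfies the second alternative ($G-Z$ has no component). Also the case $h=1$ is immediate (if $A_1\ne\emptyset$ a single vertex of $A_1$ is the required tree; if $A_1=\emptyset$ then $Z=\emptyset$ works). So assume $h\ge 2$ and suppose there is \emph{no} tree $T$ in $G$ with $\lvert V(T)\rvert\le r$ meeting all of $A_1,\dots,A_h$; the task is to produce $Z$.

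It is instructive to do $h=2$ first. Since no tree with at most $r$ vertices meets both $A_1$ and $A_2$, a shortest $A_1$--$A_2$ path in $G$ has more than $r$ vertices, so $A_2$ is disjoint from the first $\lfloor r\rfloor$ breadth-first layers around $A_1$. Writing $L_d=\{v\in V(G):\operatorname{dist}_G(v,A_1)=d\}$, the sets $L_0,L_1,\dots,L_{\lfloor r\rfloor}$ are $\lfloor r\rfloor+1>r$ pairwise disjoint sets, so one of them, say $L_{d^*}$, has $\lvert L_{d^*}\rvert\le \lvert V(G)\rvert/r$; put $Z=L_{d^*}$. Every component of $G-Z$ lies entirely in $\bigcup_{d<d^*}L_d$ or in $\bigcup_{d>d^*}L_d$; the first kind misses $A_2$ (since $d^*\le\lfloor r\rfloor$) and the second kind misses $A_1$. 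Hence $Z$ is as required, with $\lvert Z\rvert\le \lvert V(G)\rvert/r=(h-1)\lvert V(G)\rvert/r$.

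For general $h$ I would run an iterated, multi-source version of this. The target is to exhibit $Z$ as a union of $h-1$ breadth-first layers, each of size at most $\lvert V(G)\rvert/r$. Fix the ordering of the sets; for $i=1,\dots,h-1$ let $W_i$ be the set of vertices lying in some subtree of $G$ with few vertices that meets each of $A_1,\dots,A_i$ (a ``cheap centre for the first $i$ sets''). The failure of the first alternative forces $A_{i+1}$ to be far (more than $r$ layers) from $W_i$, so one can pick a sparse breadth-first layer $L^{(i)}$ separating $W_i$, hence a region from which $A_1,\dots,A_i$ are cheaply reachable, from $A_{i+1}$. Setting $Z=\bigcup_{i=1}^{h-1}L^{(i)}$ gives $\lvert Z\rvert\le (h-1)\lvert V(G)\rvert/r$, and the point is that once all $h-1$ layers are removed, any component of $G-Z$ is confined to a region from which at most $h-1$ of the $A_i$'s can be reached cheaply, so it misses one of them. (An equivalent packaging replaces the layers by level sets of a single $1$-Lipschitz potential $f(v)=\max_T\,(\kappa(T)\cdot r-\lvert V(T)\rvert+1)$, where $T$ ranges over subtrees of $G$ containing $v$ and $\kappa(T)$ is the largest $k$ with $A_1,\dots,A_k$ all meeting $T$; the assumption gives $f\le (h-1)r$ everywhere, and $Z$ is a union of $h-1$ sparsest level sets, one taken from each of $h-1$ consecutive ``bands'' of $f$-values of length $\approx r$.)

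The step I expect to be the real work is the last one: making precise, in the general case, both that every component of $G-Z$ genuinely misses some $A_i$ and that the total size is \emph{exactly} $(h-1)\lvert V(G)\rvert/r$ — a carelessly nested recursion instead loses a $\Theta(\sqrt h)$ factor in the bound. Getting the sharp constant is precisely why the removed sets must be breadth-first layers taken relative to the right ``merged sources'' $W_i$ and at a common scale $\approx r$, so that a component of $G-Z$ sits inside a single stratum and the ``budget $r$'' counting argument pins down how many of the $A_i$ a subtree confined to that stratum can meet. Minor floor/ceiling adjustments in choosing the layers when $r$ is not an integer are routine.
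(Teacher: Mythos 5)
This lemma is quoted from Alon, Seymour and Thomas and is not proved in the paper, so there is no in-paper argument to compare yours against; I can only assess the reconstruction on its own terms. Your reduction to $r>h-1$, the case $h=1$, and the case $h=2$ (BFS layers from $A_1$, take the sparsest of more than $r$ disjoint layers) are correct.

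For $h\ge 3$ the proposal has a genuine gap, which you partly acknowledge but do not close. Two concrete problems. First, the assertion that ``the failure of the first alternative forces $A_{i+1}$ to be far (more than $r$ layers) from $W_i$'' does not follow. The failure of the first alternative only says there is no tree on at most $r$ vertices meeting \emph{all} $h$ sets; a vertex of $A_{i+1}$ at distance $d$ from $W_i$ yields a tree on roughly $c_i+d$ vertices meeting only $A_1,\dots,A_{i+1}$, where $c_i$ is whatever budget quantifies ``few vertices'' in the definition of $W_i$. To contradict the hypothesis one must still extend that tree to $A_{i+2},\dots,A_h$ within the same total budget $r$, so the $h-1$ layer families are forced to share the budget: if family $i$ has $r_i$ layers with $\sum_i r_i\le r$, the sparsest-layer bound gives $|Z|\le\sum_i n/r_i$, which by AM--HM is at least $(h-1)^2n/r$ --- a loss of order $h$, not a routine adjustment. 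Second, the concluding inference ``a component confined to a region from which at most $h-1$ of the $A_i$'s can be reached cheaply\dots misses one of them'' conflates non-existence of a cheap Steiner tree with non-intersection: a component of $G-Z$ can intersect every $A_i$ while containing no tree on at most $r$ vertices meeting them all, and outcome (ii) requires the former to be impossible. This is exactly the content of the lemma, and neither the layering nor the Lipschitz-potential packaging as sketched delivers it; the argument of Alon, Seymour and Thomas obtains both the sharp count and the miss-some-$A_i$ guarantee through an induction on $h$ in which the lemma itself (with both outcomes) is reapplied to $A_1,\dots,A_{h-1}$. As written, the general case remains unproven.
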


We deduce an edge variant of this lemma.

\begin{lemma}\label{lem:line-ast}
  Let \(G\) be a graph without isolated vertices, let
  \(A_1, \ldots, A_h\) be \(h\) subsets of \(V(G)\), and let $r$ be a real number with \(r \ge 1\). 
  Then either: 
  \begin{itemize}
    \item\label{itm:tree} there is a tree \(T\) in \(G\) with \(|E(T)| \le r\) such that 
    \(V(T) \cap A_i \neq \emptyset\) for every \(i \in \{1, \ldots, h\}\), or
    \item\label{itm:separator} there exists \(F \subseteq E(G)\) with \(|F| \le (h-1)|E(G)|/r\)
    such that no component of \(G - F\) intersects
    all of \(A_1, \ldots, A_h\).
  \end{itemize}
\end{lemma}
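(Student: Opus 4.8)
The plan is to reduce Lemma~\ref{lem:line-ast} to Lemma~\ref{lem:ast} by applying the latter to the line graph $L(G)$ rather than to $G$ itself. The natural dictionary is: a tree $T$ in $G$ with $|E(T)| \le r$ corresponds to a connected subgraph of $L(G)$ on $\le r$ vertices (its edge set), and an edge separator $F \subseteq E(G)$ corresponds to a vertex set $Z = F \subseteq V(L(G))$. The subtlety is translating the ``hitting'' condition: we want $V(T) \cap A_i \ne \emptyset$, a condition about vertices of $G$, whereas Lemma~\ref{lem:ast} applied to $L(G)$ gives a tree hitting prescribed subsets of $E(G)$. So first I would define, for each $i$, the set $A_i^* \subseteq E(G)$ consisting of all edges of $G$ incident to at least one vertex of $A_i$; since $G$ has no isolated vertices, every vertex lies on some edge, so $A_i^* \ne \emptyset$ whenever $A_i \ne \emptyset$. (If some $A_i$ is empty the lemma is trivial, taking $F = \emptyset$ or the empty tree, so assume all $A_i$ nonempty.)

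Next I would apply Lemma~\ref{lem:ast} to the graph $L(G)$, the subsets $A_1^*, \ldots, A_h^*$ of $V(L(G)) = E(G)$, and the same real $r \ge 1$. Note $|V(L(G))| = |E(G)|$, so the separator bound $(h-1)|V(L(G))|/r$ becomes exactly $(h-1)|E(G)|/r$ as required. In the first outcome, we get a tree $T^*$ in $L(G)$ with $|V(T^*)| \le r$ such that $V(T^*) \cap A_i^* \ne \emptyset$ for all $i$. The set $V(T^*) \subseteq E(G)$ is a connected set of edges in $G$ (connectivity in $L(G)$ means the corresponding edges form a connected subgraph of $G$), so the subgraph $G[V(T^*)]$ of $G$ induced by these edges is connected; take any spanning tree $T$ of it. Then $|E(T)| \le |V(T^*)| \le r$, and for each $i$ there is an edge $e \in V(T^*) \cap A_i^*$, which by definition of $A_i^*$ has an endpoint in $A_i$; that endpoint lies in $V(T)$, so $V(T) \cap A_i \ne \emptyset$. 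This is the first conclusion.

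In the second outcome of Lemma~\ref{lem:ast}, we get $Z \subseteq V(L(G)) = E(G)$ with $|Z| \le (h-1)|E(G)|/r$ such that no component of $L(G) - Z$ intersects all of $A_1^*, \ldots, A_h^*$. Set $F := Z \subseteq E(G)$; then $|F| \le (h-1)|E(G)|/r$. I claim no component $C$ of $G - F$ intersects all of $A_1, \ldots, A_h$. The key observation is that the edge set $E(C)$ of any component $C$ of $G - F$ (with at least one edge) induces a connected subgraph of $L(G) - F$, hence lies inside a single component $C^*$ of $L(G) - F$. If $C$ met every $A_i$, say $a_i \in V(C) \cap A_i$, then since $G$ has no isolated vertices and $C$ is a component, $a_i$ is incident in $G - F$ to some edge $e_i \in E(C) \subseteq V(C^*)$; and $e_i \in A_i^*$ since it has endpoint $a_i \in A_i$. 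Thus $C^*$ would intersect every $A_i^*$, a contradiction. (One should also handle the trivial corner case where $C$ is a single vertex with no incident edge in $G - F$: then $C$ can meet at most one $A_i$ only if $h \le 1$, in which case the lemma is again trivial — or more cleanly, a one-vertex component cannot intersect all of $A_1, \dots, A_h$ when $h \ge 2$ unless that vertex lies in all $A_i$, which is fine too as long as... actually simplest: if $h=1$ the tree outcome with a single-vertex tree and $r \ge 1$ always holds, so assume $h \ge 2$, and then note a component meeting all $A_i$ for $h\ge 2$ has at least two vertices of $G$, hence at least one edge.)

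The main obstacle I anticipate is precisely this bookkeeping about isolated vertices and single-vertex components, and making the correspondence between ``edge sets inducing connected subgraphs of $G$'' and ``vertex sets connected in $L(G)$'' airtight — in particular that the empty-edge-set and the $h \le 1$ degenerate cases are disposed of first so that every relevant component of $G - F$ contributes at least one vertex to $L(G) - F$. Once those edge cases are cleared, the translation through the line graph is mechanical, and the numerics ($|V(L(G))| = |E(G)|$, $|V(T^*)| \ge |E(T)|$ for a spanning tree) match up exactly with the claimed bounds.
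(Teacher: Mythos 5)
Your reduction is exactly the paper's: apply Lemma~\ref{lem:ast} to $L(G)$ with the sets $A_i^* = E_G(A_i, V(G))$, translate a small tree in $L(G)$ into a tree of $G$ with few edges, and take $F := Z$ in the separator case. The main body is correct (and your spanning-tree step in the first outcome is, if anything, slightly more careful than the paper's phrasing, since the edges of $V(T^*)$ could a priori contain a cycle in $G$).

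However, your final resolution of the single-vertex corner case rests on a false assertion: it is \emph{not} true that ``a component meeting all $A_i$ for $h \ge 2$ has at least two vertices.'' The sets $A_1, \ldots, A_h$ are not assumed disjoint, so a component of $G - F$ consisting of one vertex $v$ with $v \in A_1 \cap \cdots \cap A_h$ (all edges at $v$ having been placed in $F$) meets every $A_i$, and in that situation the set $F$ genuinely fails the second outcome. The correct fix is the one you mentioned in passing and then discarded: such a vertex $v$ is itself a tree $T$ with $|E(T)| = 0 \le r$ and $V(T) \cap A_i \ne \emptyset$ for all $i$, so the \emph{first} outcome of the lemma holds. (Equivalently, dispose of the case $\bigcap_i A_i \neq \emptyset$ at the outset via a one-vertex tree; then every component of $G-F$ meeting all $A_i$ has an edge and your main argument applies.) This is precisely how the paper closes the case, so the gap is small but the claim as you last stated it is wrong and must be replaced.
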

\begin{proof}
  Apply Lemma~\ref{lem:ast} to \(L(G)\), the sets
  \(E_G(A_1, V(G)), \ldots, E_G(A_h, V(G))\) and \(r\). 
  If \(L(G)\) contains a tree \(T_0\) with \(|V(T_0)| \le r\) such that 
  \(V(T_0) \cap E_G(A_i, V(G)) \neq \emptyset\) for every \(i \in \{1, \ldots, h\}\),
  then \(G\) contains a tree \(T\) such that \(E(T) = V(T_0)\), and thus
  \(|E(T)| = |V(T_0)| \le r\)
  and \(V(T) \cap A_i \neq \emptyset\) for every \(i \in \{1, \ldots, h\}\). 
  
  Now suppose that \(L(G)\) contains a set
  \(Z \subseteq V(L(G))\) with
  \(|Z| \le (h-1)|V(G)|/r\)
  such that no component of \(L(G) - Z\)
  intersects
  each \(E_G(A_i, V(G))\) with
  \(i \in \{1, \ldots, h\}\).
  Let \(F := Z\). 
  If no component of \(G - F\) intersects all of \(A_1, \ldots, A_h\), then $F$ satisfies the lemma, and we are done. 
  Assume thus  that some component \(C\) of \(G - F\) 
  intersects all of \(A_1\), \ldots,
  \(A_h\). By our assumption on \(F\), 
  there exists \(i \in \{1, \ldots, h\}\)
  such that \(E(C) \cap E_G(A_i, V(G)) = \emptyset\). Hence, \(C\) must consist of a single vertex that belongs to all of
  \(A_1\), \ldots, \(A_h\), and therefore the tree \(T := C\) satisfies the lemma. 
\end{proof}

The following lemma is the heart of the proofs of our results.

\begin{lemma}\label{lem:line-induction}
   Let $t, \Delta, m, h$ be integers with \(t \ge 3\) and \(\Delta, m, h \ge 1\), and let \(p := \sqrt{(t-3)\Delta m}+\Delta\).
   Let \(G\) be a connected \(K_t\)-minor-free graph of
   maximum degree at most \(\Delta\) with \(m\) edges,
   let \(C\) be a proper induced subgraph of \(G\) with \(|V(C)| \ge 1\), 
   and let  \(E_1, \ldots, E_h\) be disjoint nonempty subsets of
   \(E(G) \setminus E(C)\) such that
   \(|E_i| \le p\) for each 
   \(i \in \{1, \ldots, h\}\).
   If there exists a \(K_h\)-model \((U_1, \ldots, U_h)\) in
   \(G - V(C)\) such that
   \(N(V(C)) \subseteq U_1 \cup \cdots \cup U_h\) and
   \(E(V(C), U_i) \subseteq E_i\) for each \(i \in \{1, \ldots, h\}\),
   then \(L(G)[E(C) \cup E_1 \cup \cdots \cup E_h]\) admits a \((t-2,p)\)-partition \(H\) rooted at
   \(\{E_1, \ldots, E_h\}\).
\end{lemma}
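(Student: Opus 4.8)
The plan is to prove Lemma~\ref{lem:line-induction} by induction, following the proof of the product‑structure strengthening of Alon--Seymour--Thomas in~\cite{illingworth2021alon}, with Lemma~\ref{lem:line-ast} in place of its vertex version; the genuinely new feature is that the ``separator'' it produces is a set of edges, not vertices.

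\emph{Setup and reductions.} I would induct on $|E(C)|$, with a secondary induction on $h$, and write $A_i := N(U_i) \cap V(C)$. Deleting isolated vertices of $C$ affects neither $E(C)$ nor the hypotheses, so assume $C$ has none; if $C$ is still disconnected, handle each component separately (the $K_h$‑model and the sets $E_i$ are inherited, as $C$ is induced) and glue the resulting partitions along the clique $\{E_1, \dots, E_h\}$ by Fact~\ref{fact}\ref{fact:complete-intersection}, each component having fewer edges. So assume $C$ is connected. If some $A_i = \emptyset$, then $N(V(C)) \subseteq \bigcup_{j \ne i} U_j$ and no edge of $E_i$ meets $V(C)$ (such an edge would lie in $E_G(V(C),U_i)=\emptyset$); apply the induction hypothesis to $(U_j)_{j \ne i}$ and $(E_j)_{j \ne i}$ (smaller, as $h$ drops), then reinsert $E_i$ as a part adjacent only to the clique $\{E_j\}_{j \ne i}$, which is legitimate by Fact~\ref{fact}\ref{fact:attaching-to-clique} since in $L(G)$ the set $E_i$ is non‑adjacent to every part inside $E(C)$. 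So assume every $A_i \ne \emptyset$. The base case $E(C) = \emptyset$ is immediate: the complete graph on $\{E_1, \dots, E_h\}$ is a $(t-2,p)$‑partition of $L(G)[E_1 \cup \dots \cup E_h]$, its treewidth $h-1$ being at most $t-2$ because $(U_1, \dots, U_h)$ is a $K_h$‑model in the $K_t$‑minor‑free graph $G$.

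\emph{The tree alternative.} Suppose $E(C) \ne \emptyset$. Set $r := \sqrt{(t-3)m/\Delta}$ (we may assume $t \ge 4$, as $t=3$ gives a forest, and $r \ge 1$, the remaining instances being bounded and easily dispatched directly), and apply Lemma~\ref{lem:line-ast} to $C$, the sets $A_i$, and $r$. If it returns a tree $T \subseteq C$ with $|E(T)| \le r$ meeting every $A_i$, then $(U_1, \dots, U_h, V(T))$ is a $K_{h+1}$‑model in $G$ (as $A_i$ has a neighbour in $U_i$), forcing $h+1 \le t-1$. Take $V(T)$ as a new blob and let $E_{h+1}$ be the set of edges of $C$ incident to $V(T)$; it is nonempty and $|E_{h+1}| \le \Delta|V(T)| \le \Delta(|E(T)|+1) \le \Delta r + \Delta = p$. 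Apply the induction hypothesis to each component $C''$ of $C - V(T)$ (valid since $|E(C'')| < |E(C)|$, since $C$ is induced, and since $E(C) = E_{h+1} \sqcup \bigsqcup_{C''} E(C'')$), obtaining $(t-2,p)$‑partitions $H_{C''}$ rooted at $\{E_1, \dots, E_{h+1}\}$, and glue them along that clique via Fact~\ref{fact}\ref{fact:complete-intersection}; as $L(G)$ has no edge between $E(C'')$ and $E(C''')$ for distinct $C'', C'''$, the union is a valid $(t-2,p)$‑partition, rooted at the sub‑clique $\{E_1, \dots, E_h\}$.

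\emph{The separator alternative -- the obstacle.} If instead Lemma~\ref{lem:line-ast} returns $F \subseteq E(C)$ with $|F| \le (h-1)|E(C)|/r$ such that no component of $C - F$ meets all $A_i$, then $|F|$ is generally too large to form a single part, so the proof must recurse on pieces of $C - F$; the crux -- the price of working with edges rather than vertices -- is that deleting $F$ does not detach these pieces, so a piece may still have neighbours, namely the far endpoints of the $F$‑edges leaving it, that lie in no blob. I would first shrink $F$ to a minimal set with the property; minimality makes the quotient $Q$ (nodes the components of $C - F$, edges the edges of $F$) a tree, with $\ge 2$ nodes as $C$ is connected and every $A_i \ne \emptyset$. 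Pick a leaf $D$ of $Q$ with its unique incident $F$‑edge $e = uv$, $u \in V(D)$, and an index $i_D$ with $V(D) \cap A_{i_D} = \emptyset$. Recurse on $D$ with blobs $(U_j)_{j \ne i_D}$ together with $B := U_{i_D} \cup P$, where $P$ joins $v$ to $U_{i_D}$ through the connected set $V(C) \setminus V(D)$ (which meets $A_{i_D}$), and with the singleton part $\{e\}$: here $E_G(V(D), B) = \{e\}$ and $|E(D)| < |E(C)|$, so this is a valid smaller instance. Recurse also on $C' := G[V(C) \setminus V(D)]$ (connected) with the original blobs, one of them enlarged -- guided by the attachments of $D$ to the blobs -- so as to absorb the lone interface vertex $u$ while the associated part grows by only $e$ and stays of size $\le p$; again a smaller instance. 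Finally glue the two partitions along their common clique of root parts by Fact~\ref{fact}\ref{fact:complete-intersection}. I expect the technical heart to be exactly this construction: arranging the enlarged blob of $C'$ to be connected, disjoint from $V(C')$, adjacent to all retained blobs, and equipped with a part of size $\le p$, while keeping the two recursive partitions consistent on $e$ and dealing with degenerate configurations (such as a leaf $D$ touching no blob at all). That bookkeeping is presumably where the bulk of the work of the paper lies.
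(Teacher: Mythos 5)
Your setup, the reduction to connected \(C\) with all \(A_i\neq\emptyset\), the empty-\(A_i\) case, and the tree alternative all match the paper's proof (the paper inducts on \(2|V(C)|+h\) rather than \((|E(C)|,h)\), but that is immaterial). The genuine gap is in the separator alternative, and it starts from a false premise: \(F\) is \emph{not} too large to form a single part. Once \(C\) is connected and every \(A_i\) is nonempty, \((V(C),U_1,\dots,U_h)\) is a \(K_{h+1}\)-model, so \(h\le t-2\); applying Lemma~\ref{lem:line-ast} with \(r=\sqrt{(h-1)m/\Delta}\) then gives \(|F|\le (h-1)|E(C)|/r\le\sqrt{(h-1)\Delta m}\le\sqrt{(t-3)\Delta m}<p\). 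This single observation is the key to the edge setting, and missing it sends you down a leaf-peeling construction that you yourself leave unfinished and that has concrete defects: when you absorb the interface vertex \(u\) into a blob \(U_j\) and add the edge \(e\) to \(E_j\), you may get \(|E_j\cup\{e\}|=p+1>p\) since \(|E_j|\le p\) is all you know; and a leaf \(D\) of the quotient tree meeting no \(A_j\) has no blob to route through. The "bookkeeping" you defer is not where the work lies --- it is a different (and, as sketched, broken) argument.

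The paper's resolution is cleaner: take \(F\) minimal, let \(C_1,\dots,C_s\) be the components of \(C-F\), and make \emph{all of} \(F\) a single new root part. For each \(C_j\), pick \(i'\) with \(V(C_j)\cap A_{i'}=\emptyset\), replace the part \(E_{i'}\) by \(F\), and enlarge the blob \(U_{i'}\) to \(U_{i'}\cup X\), where \(X\) is the union of \emph{all} components of \(C-F\) meeting \(A_{i'}\) (this is connected together with \(U_{i'}\)). Minimality of \(F\) guarantees every edge of \(F\) leaving \(C_j\) has an end in \(U_{i'}\cup X\), so \(N(V(C_j))\) is covered and \(E(V(C_j),U_{i'}\cup X)\subseteq F\); the induction hypothesis applies to \(C_j\) with parts \(E_1,\dots,E_{i'-1},F,E_{i'+1},\dots,E_h\), after which \(E_{i'}\) is reattached to the root clique by Fact~\ref{fact}\ref{fact:attaching-to-clique} (using \(h\le t-2\)), and the \(H_j\) are glued along \(\{F,E_1,\dots,E_h\}\) by Fact~\ref{fact}\ref{fact:complete-intersection}. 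You would need to replace your separator branch with this (or a correct equivalent) for the proof to go through.
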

\begin{proof}
  We prove the lemma by induction on the value \(2|V(C)| + h\).
  Since \(G\) is \(K_t\)-minor free, we have \(h \le t - 1\).
  
  Suppose that \(C\) is disconnected, say \(C\) is the union of vertex disjoint graphs \(C_1\) and \(C_2\) with \(|V(C_{\alpha})|\geq 1\) for \(\alpha \in \{1, 2\}\). 
  For each \(\alpha \in \{1, 2\}\), we have \(2|V(C_\alpha)| + h < 2 |V(C)| + h\),
  \(N(V(C_\alpha)) \subseteq N(V(C))\) and
  \(E(V(C_\alpha), U_i) \subseteq E(V(C), U_i)\) for each \(i \in \{1, \ldots, h\}\), so we may apply
  the induction hypothesis to \(C_\alpha\) to obtain a \((t-2,p)\)-partition \(H_\alpha\) of
  \(L(G)[E(C_\alpha) \cup E_1 \cup \cdots \cup E_h]\) rooted at
   \(\{E_1, \ldots, E_h\}\).
  By Fact~\ref{fact}\ref{fact:complete-intersection}, \(H = H_1 \cup H_2\) is the
  desired \((t-2, p)\)-partition of \(L(G)[E(V(C) \cup E_1 \cup \cdots \cup E_h)]\).
  Therefore, we may assume that \(C\) is connected.

  For each \(i \in \{1, \ldots, h\}\), let \(A_i = V(C) \cap N(U_i)\).
  Suppose that some \(A_i\) is empty, say without loss of generality \(A_h = \emptyset\).
  Since \(G\) is connected, not all sets \(A_i\) are empty, so
  \(h \ge 2\). 
  We have \(2|V(C)| + (h-1) < 2|V(C)| + h\) and
  \(N(V(C)) \subseteq U_1 \cup \cdots \cup U_{h-1}\)
  because \(N(V(C)) \cap U_h = \emptyset\),
  so we may apply the induction hypothesis to
  \(C\) and the sets \(E_1, \ldots, E_{h-1}\) to obtain a  \((t-2, p)\)-partition \(H_0\) of \(L(G)[E(C) \cup E_1 \cup \cdots \cup E_{h-1}]\)
  rooted at \(\{E_1, \ldots, E_{h-1}\}\).
  Since \(A_h = \emptyset\), no edge of \(E_h\) is incident to an edge in \(E(C)\), and thus the line graph \(L(G)\) does not contain any edges between \(E_h\) and \(E(C)\). 
  Hence, by Fact~\ref{fact}\ref{fact:attaching-to-clique}, the desired \((t-2, p)\)-partition \(H\) of \(L(G)[E(C) \cup E_1 \cup \cdots \cup E_h]\) can be obtained from \(H_0\) by adding \(E_h\) as a new vertex adjacent to \(E_1, \ldots, E_{h-1}\). 
  Therefore, we may assume that the sets \(A_1, \ldots, A_h\) are nonempty.
  
  Since \((V(C), U_1, \ldots, U_h)\) is a \(K_{h+1}\)-model in \(G\)
  and \(G\) is \(K_t\)-minor-free, we have \(h \le t - 2\).
  If \(|V(C)| = 1\), then \(E(C) = \{\emptyset\}\), and the complete graph on \(\{E_1, \ldots, E_h\}\) gives the desired
  \((t-2, p)\)-partition \(H\), so we may assume that \(|V(C)| > 1\).

  Suppose that \(C\) contains a tree \(T\) on  at most \(\sqrt{(h-1)m/\Delta}+1\) vertices that contains at least one vertex in 
  \(A_i\) for each \(i \in \{1, \ldots, h\}\). Let \(U_{h+1} = V(T)\), so that \((U_1, \ldots, U_{h+1})\) is a \(K_{h+1}\)-model, and let \(E_{h+1} := E(V(T)) \cup E(V(T), V(C)-V(T))\). 
  Note that $E_{h+1}$ is nonempty, since \(C\) is connected and \(|V(C)| > 1\). 
  Observe that
  \[
    |E_{h+1}| \le \Delta\cdot|V(T)| \le \Delta\cdot\left(\sqrt{(h-1)m/\Delta} + 1\right) \le \sqrt{(t-3)\Delta m} + \Delta = p.
  \]  
  If $V(T)=V(C)$ then the complete graph on \(\{E_1, \ldots, E_{h+1}\}\) gives the desired \((t-2, p)\)-partition \(H\) of  \(L(G)[E(C) \cup E_1 \cup \cdots \cup E_h]\). (Recall that \(h \le t - 2\).) 

  If $V(T)\neq V(C)$, then we have \(2\cdot|C - V(T)| + (h + 1) \le 2(|V(C)| - 1) + (h + 1) < 2 |V(C)| + h\), so we may apply the induction hypothesis
  to \(C - V(T)\),  \((U_1, \ldots, U_{h+1})\) and the sets \(E_1\), \ldots, \(E_{h+1}\) to obtain a \((t-2,p)\)-partition \(H\) of \(L(G)[E(C-V(T)) \cup E_1 \cup \cdots \cup E_{h+1}]\) rooted at \(\{E_1, \ldots, E_{h+1}\}\). 
  Since \(E(C) = E(C - V(T)) \cup E_{h+1}\), the partition $H$ is a \((t-2,p)\)-partition of  \(L(G)[E(C) \cup E_1 \cup \cdots \cup E_h]\) rooted at
   \(\{E_1, \ldots, E_h\}\), as desired.

  It remains to consider the case when no tree in \(C\) with at most
  \(\sqrt{(h-1)m/\Delta} + 1\) vertices intersects all of
  \(A_1, \ldots, A_h\). In particular, \(h \ge 2\). 
  Since \(C\) is connected and \(|V(C)| > 1\), $C$ does not contain isolated vertices. 
  By Lemma~\ref{lem:line-ast} with \(r=\sqrt{(h-1)m/\Delta}\), there exists a set \(F \subseteq E(C)\) with
  \[
    |F| \le (h-1)m/\sqrt{(h-1)m/\Delta} = \sqrt{(h-1)\Delta m}
    \le \sqrt{(t-3)\Delta m} < p
  \]
  such that no component of \(C - F\) intersects all of 
  \(A_1, \ldots, A_h\).
  Among all such sets \(F\), choose a smallest one.
  Since \(C\) is connected and the sets \(A_1, \ldots, A_h\) are
  nonempty, \(F \neq \emptyset\).

  Let \(C_1, \ldots, C_s\) be the components of \(C - F\). 
  Observe that \(C_1, \ldots, C_s\) are induced subgraphs of $C$ (and thus of $G$), by our choice of $F$. 
  Our goal now is to show that for each \(j \in \{1, \ldots, s\}\),
  there exists a \((t-2,p)\)-partition \(H_j\) of \(L(G)[E(C_j) \cup F \cup E_1 \cup \cdots \cup E_h]\)
  rooted at \(\{F, E_1, \ldots, E_h\}\). 
  By Fact~\ref{fact}\ref{fact:complete-intersection}, this will then imply that \(L(G)[E(C) \cup E_1 \cup \cdots \cup E_h]\) admits a \((t-2,p)\)-partition \(H\) rooted at \(\{E_1, \ldots, E_h\}\), as desired. 

  Towards this goal, fix \(j \in \{1, \ldots, s\}\), and let \(i' \in \{1, \ldots, h\}\) be such that
  \(V(C_j) \cap A_{i'} = \emptyset\).
  Consider the sets \(E_1', \ldots, E_h'\) where \(E_{i}' = E_i\)
  for \(i \neq i'\) and \(E_{i'}' = F\).
  
  Let \(X\) denote the set of all vertices of \(C\) that lie in some component of \(C - F\) containing at least one
  vertex from \(A_{i'}\).
  Since each component of \(C[X]\) contains a vertex from \(A_{i'}\), the set
  \(U_{i'} \cup X\) is connected in \(G\).
  Let \((U_1', \ldots, U_h')\) be the \(K_h\)-model where
  \(U_i' = U_i\) for \(i \neq i'\) and \(U_{i'}' = U_{i'} \cup X\).
  By the minimality of \(F\),
  each edge \(e \in F\) with an end in \(C_j\) belongs to one component
  of \(C - (F \setminus \{e\})\) with an element of \(A_i'\),
  so \(e\) has an end in \(U_{i'}'\). Hence,
  \(N(C_j) \subseteq (U_1' \cup \cdots \cup U_{h}')\). 
  Note also that \(2|V(C_j)| + h < 2|V(C)| + h\). 
  Therefore, we may apply the induction hypothesis to \(C_j\) and
  the sets \(E_1', \ldots, E_h'\) to obtain a \((t-2, p)\)-partition
  \(H_j^0\) of \(L(G)[E(C_j) \cup E_1' \cup \cdots \cup E_h']\)
  that is rooted at \(\{E_1', \ldots, E_h'\}\).
  
  Since \(V(C_j) \cap A_{i'} = \emptyset\), and since \(E_{i'} \subseteq E(G) \setminus E(C)\), no edge of $E_{i'}$ is incident to an edge in \(E(C_j)\). 
  Thus, the line graph
  \(L(G)\) does not contain any edges between \(E_{i'}\) and \(E(C_j)\). 
  Since \(E(V(C), U_{i'}) \subseteq E_{i'}\) and \(h \le t - 2\), we can
  obtain a \((t-2, p)\)-partition \(H_j\) of
  \(L(G)[E(C_j) \cup F \cup E_1 \cup \cdots \cup E_h]\) rooted at
  \(\{F, E_1, \ldots, E_h\}\) from \(H_j^0\) 
  by adding \(E_{i'}\) as a new vertex adjacent to \(E_1', \ldots, E_h'\).
  
  By Fact~\ref{fact}\ref{fact:complete-intersection},
  \(H := H_1 \cup \cdots \cup H_s\) is a \((t-2, p)\)-partition
  of \(L(G)[E(C) \cup E_1 \cup \cdots \cup E_h]\) rooted at
  \(\{F, E_1, \ldots, E_h\}\) and in particular at
  \(\{E_1, \ldots, E_h\}\). 
  This concludes the proof of the lemma. 
\end{proof} 

\begin{proof}[Proof of Theorem~\ref{thm:line-graph-product-structure}]
  Let \(G\) be a \(K_t\)-minor-free graph 
  of maximum degree \(\Delta\) with \(m\) edges, and let
  \(G_1\), \ldots, \(G_s\) be the components of \(G\).
  For each \(j \in \{1, \ldots, s\}\), we construct a \((t-2, p)\)-partition
  \(H_j\) of \(L(G_j)\).
  If \(G_j\) is an isolated vertex, then \(L(G_j)\) is
  an empty graph and we can take the empty graph as \(H_j\).
  If \(G_j\) is not an isolated vertex, then choose any vertex \(x \in V(G_j)\).
  By Lemma~\ref{lem:line-induction} applied to \(C = G_j - x\),
  \(E_1 = E(\{x\}, V(G))\) and \(U_1 = \{x\}\),
  \(L(G_j)\) has a \((t-2, p)\)-partition \(H_j\).
  Hence, by Fact~\ref{fact}\ref{fact:complete-intersection},
  \(H = H_1 \cup \cdots \cup H_s\) is a \((t-2, p)\)-partition of \(L(G)\),
  and therefore \(L(G)\) is isomorphic to a subgraph of \(H \boxtimes K_{\lfloor p \rfloor}\).
\end{proof}

\begin{proof}[Proof of Theorem~\ref{thm:line-graph-tw}]
  Let \(G\) be a \(K_t\)-minor-free graph 
  of maximum degree \(\Delta\) with \(m\) edges.
  By Theorem~\ref{thm:line-graph-product-structure}, there exists a graph
  \(H\) with \(\tw(H) \le t-2\) such that
  \(L(G)\) is isomorphic to a subgraph of \(H \boxtimes K_{\lfloor p \rfloor}\)
  where \(p = \sqrt{(t-3)\Delta m} +\Delta\).
  If \((B_u)_{u \in V(T)}\) is a tree-decomposition of \(H\) of width
  at most \(t - 2\), then \((B_u \times V(K_{\lfloor p \rfloor}))_{u \in V(T)}\)
  is a tree-decomposition of \(H \boxtimes K_{\lfloor p \rfloor}\) of width at most
  \((t-1)\lfloor p\rfloor - 1\), so
  \[\tw(L(G)) \le \tw(H \boxtimes K_{\lfloor p \rfloor})
  \le (t-1)\lfloor p \rfloor - 1 = (t-1)\left\lfloor \sqrt{(t-3)\Delta m}+\Delta\right\rfloor - 1 .\qedhere
  \]
\end{proof}

\begin{proof}[Proof of Theorem~\ref{thm:edge-separator}]
  Let \(G\) be a \(K_t\)-minor-free graph 
  of maximum degree \(\Delta\) with
  \(n\) vertices and \(m\) edges, and
  let \(w \colon V(G) \to [0, \frac{1}{2}]\)
  be a weight function such that
  \(\sum_{x \in V(G)}w(x) = 1\).

  By Theorem~\ref{thm:line-graph-tw}, we have
  \(\tw(L(G)) \le (t-1)\lfloor\sqrt{(t-3)\Delta m}+\Delta\rfloor - 1\).
  Let \((B_u)_{u \in V(T)}\) be a
  tree-decomposition of \(L(G)\) of minimum width. 
  For each \(x \in V(G)\), the set \(E(\{x\}, V(G))\)
  is a clique in \(L(G)\), so by
  Fact~\ref{fact}\ref{fact:complete-intersection}
  we can choose a node
  \(u(x) \in V(T)\) such that
  \(E(\{x\}, V(G))\subseteq B_{u(x)}\).
  
  For a subtree \(T'\) of \(T\), we define the
  weight \(w(T')\) of \(T'\) as the
  sum of weights \(w(x)\) of all
  vertices \(x \in V(G)\) such that \(u(x) \in V(T')\).
  Let us orient an edge \(u_1 u_2 \in E(T)\) from
  \(u_1\) to \(u_2\) when in \(T - u_1 u_2\)
  the component
  containing \(u_2\) has weight greater than \(\frac{1}{2}\)
  (and thus the component containing \(u_1\) has weight
  smaller than \(\frac{1}{2}\)).
  We do not orient an edge \(e \in E(T)\) in any direction if
  both components of \(T - e\) have weight exactly
  \(\frac{1}{2}\).

  Choose a node \(v\) in \(T\) such that
  no edge incident with \(v\) in \(T\)
  is oriented away from \(v\).
  We claim that \(F = B_v\) satisfies the theorem.
  Since the tree-decomposition has minimum width, 
  we have \(|F| \le (t-1)\lfloor\sqrt{(t-3)\Delta m}+\Delta\rfloor\). 
  Consider a component \(C\) of \(G - F\).  
  If $C$ consists of a single vertex, then \(\sum_{x \in V(C)}w(x) \le \frac{1}{2}\) clearly holds. 
  If $C$ has more than one vertex, then \(E(C) \neq \emptyset\), and \(L(G)[E(C)]\) is connected since \(C\) is connected.
  We have \(E(C) \cap F = \emptyset\), so by
  Fact~\ref{fact}\ref{fact:connected},
  there must exist a
  component \(T'\) of \(T - \{v\}\) such that every 
  node \(u \in V(T)\) with \(B_u \cap E(C) \neq \emptyset\) belongs to
  \(T'\). In particular, \(u(x) \in V(T')\) for each \(x \in V(C)\).
  By our choice of \(v\), the weight of \(T'\) is at most \(\frac{1}{2}\),
  so \(\sum_{x \in V(C)}w(x) \le \frac{1}{2}\).

  Kostochka~\cite{kostochka1982minimum} and Thomason~\cite{T1984} showed that
  a \(K_t\)-minor-free graph on \(n\) vertices has at most \(O(t\sqrt{\log t}\cdot n)\) edges,
  so \((t-1)\lfloor\sqrt{(t-3)\Delta m} + \Delta\rfloor \le  O\left(t^{2} (\log t)^{1/4}\cdot \sqrt{\Delta n}\right) \). This completes
  the proof.
\end{proof}

\section*{Acknowledgments}

Gwenaël Joret and Michał T.\ Seweryn thank David R.\ Wood for many helpful discussions, in particular regarding~\cite{illingworth2021alon}. 

\bibliographystyle{plain}
\bibliography{bibliography}

\end{document}